\newtheorem{theorem}{Theorem}
\newtheorem{corollary}[theorem]{Corollary}
\newtheorem{lemma}[theorem]{Lemma}
\newtheorem{prop}[theorem]{Proposition}
\theoremstyle{definition} 
\newtheorem{example}[theorem]{Example}
\newtheorem{remark}[theorem]{Remark}
\title{Generalized Frobenius Number of Three Variables}
\author{Kittipong Subwattanachai}
\address{Graduate School of Mathematics, Nagoya University, Nagoya, Japan.}
\email{subwattanachai.k@gmail.com}
\date{\today}
\subjclass[2020]{Primary 
	11D07; 
	Secondary 11B34; 
}
\keywords{Frobenius problem, Generalized Frobenius numbers, linear Diophantine problem of Frobenius}
\begin{document}
	
	\maketitle
	
	\begin{abstract}
		For $ k \geq 2 $,   we let $ A = (a_{1}, a_{2}, \ldots, a_{k}) $ be a $k$-tuple of positive integers with $\gcd(a_{1}, a_2, \ldots, a_k) =1$ and, for a non-negative integer $s$, the generalized Frobenius number of $A$, $g(A;s) = g(a_1, a_2, \ldots, a_k;s)$, the largest integer that has at most $s$ representations in terms of $a_1, a_2, \ldots, a_k$ with non-negative integer coefficients.
		In this article, we give a formula for the generalized Frobenius number of three positive integers $(a_1,a_2,a_3)$ with certain conditions.
	\end{abstract}
	
	\section{Introduction}
	For a positive integer $ k \geq 2 $, $ A = (a_{1}, a_{2}, \ldots, a_{k}) $ is a $k$-tuple of positive integers with $ \gcd(A) = \gcd(a_{1}, a_2, \ldots, a_k) =1$.
	If we let $\mathrm{R}(A) = \mathrm{R}(a_{1},\ldots,a_{k}) = \{x_{1}a_{1} + \cdots + x_{k}a_{k} \mid x_{i}\in \mathbb{Z}_{\geq 0}, i =1,2,\ldots,k\}$ be the set of integers representable as non-negative linear combinations of $a_{1}, \ldots, a_{k}$ and let $\mathrm{NR}(A)  = \mathrm{NR}(a_{1},a_{2}, \ldots, a_{k}) = \mathbb{Z}_{\geq-1}\setminus\mathrm{R}(A)$ be the set of integers not representable as non-negative integer combinations of $a_{1}, a_{2}, \ldots, a_{k}$. It is known that $\mathrm{NR}(A)$ is finite if and only if $\gcd(a_{1}, a_{2}, \ldots, a_{k}) =1$, see for example \cite{Rosales-Numerical Semigroup}.
	
	There is the well-known linear Diophantine problem, posed by Sylvester \cite{Sylvester_MathQuestion}, known as the \emph{Frobenius problem}\footnote{It is also known as the coin problem, postage stamp problem, or Chicken McNugget problem, involves determining the largest value that cannot be formed using only coins of specified denominations.}: Given positive integers $a_{1}, a_{2}, \ldots, a_{k}$ such that $\gcd(a_{1}, a_{2}, \ldots, a_{k}) = 1$, find the largest integer that \emph{cannot} be expressed as a non-negative integer linear combination of these numbers.
	\emph{The largest integer} is called the \emph{Frobenius number} of the tuple $ A = (a_{1}, a_{2}, \ldots, a_{k})$, and is denoted by $g(A) = g(a_{1}, a_{2}, \ldots, a_{k})$. 
	With the above notation, the Frobenius number is given by
	\begin{equation*}
		g(A) = \max \mathrm{NR}(A).
	\end{equation*}
	Note that if all non-negative integers can be expressed as a non-negative integer linear combination of $A$, then $g(A)= -1$. For example, $g(1,2) = -1$.
	
	For two coprime positive integers $A = (a,b)$, it is shown by Sylvester \cite{Sylvester-On_subinvariants} that
	\begin{equation}\label{eq:twovariableclassical}
		g(a,b) = ab-a-b.
	\end{equation}
	For example, consider $A = (a,b) = (3,5)$. Then the Frobenius number of $A$ is given by $g(3,5) = 15-3-5 = 7$, which means that all integers $n > 7$ can be expressed as a non-negative integer linear combination of $3$ and $5$. 
	
	Tripathi \cite{Tripathi-Formulae_for_Frobenius_three_variables} has provided explicit but complicate formulas for calculating the Frobenius number in three variables. However, it is important to note that closed-form solutions for the general case become increasingly challenging when the number of variables exceeds three $(k > 3)$. Nevertheless, various formulas have been proposed for Frobenius numbers in specific scenarios or special cases.
	For example, explicit formulas in some particular cases of sequences, including arithmetic, geometric-like, Fibonacci, Mersenne, and triangular (see \cite{Roble_Rosales} and references therein) are known.
	
	
	For a given positive integer $n$, we let $d(n;A) = d(n;a_{1}, a_{2}, \ldots, a_{k})$ be the number of representations to $a_{1}x_{1} + a_{2}x_{2} + \cdots + a_{k}x_{k} = n$. Its generating series is given by
	\begin{equation*}
		\sum_{n\geq 0} d(n;a_1,\dots,a_k) x^n = \frac{1}{(1-x^{a_{1}})(1-x^{a_{2}})\cdots(1-x^{a_{k}}) }.
	\end{equation*}
	Sylvester \cite{Sylvester-On_the_partition_of_numbers} and Cayley \cite{Cayley:double_partition} show that $d\left(n ; a_1, a_2, \ldots, a_k\right)$ can be expressed as the sum of a polynomial in $n$ of degree $k-1$ and a periodic function of period $a_1 a_2 \cdots a_k$. Using Bernoulli numbers, Beck, Gessel, and Komatsu \cite{Beck_Gessel_Komatsu} derive the explicit formula for the polynomial section. Tripathi \cite{Tripathi-The_number_of_solutions} provides a formula for $d(n;a_{1},a_{2})$. Komatsu \cite{Komatsu-On_the_number_of_solutions} shows that the periodic function part is defined in terms of trigonometric functions for three variables in the pairwise coprime case.
	Binner \cite{Binner-number of solutions to axby+cz=n} provides a formula for the number of non-negative integer solutions to the equation $ax + by + cz = n$ and finds a relationship between the number of solutions and quadratic residues.\\

    	In this work, we will focus on a generalization of the Frobenius number. For a given non-negative integer $s$, let $$g(A;s) = g(a_1, a_2, \ldots, a_k;s) = \max\{ n \in \mathbb{Z} \mid d(n;A) \leq s\}$$ be the largest integer such that the number of expressions that can be represented by $a_1, a_2, \ldots, a_k$ is at most $s$. Notice that  $g(a_1, a_2, \ldots, a_k)= g(a_1, a_2, \ldots, a_k;0)$. That means all integers bigger than $g(A;s)$ have at least $s+1$ representations. The $g(A;s)$ is called \emph{the generalized Frobenius number}. Furthermore, $g(A;s)$ is well-defined (i.e. bounded above) (see \cite{Fukshansky-Schurmann-Bounds_on})
    	
    	As a generalization of \eqref{eq:twovariableclassical}, for $A = (a,b)$ and $s \in\mathbb{Z}_{\geq0}$, (see \cite{Beck_Robins}), an exact formula for $g(A,s) = g(a,b;s)$ is given by 
    	\begin{equation}
    		g(a,b;s) = (s+1)ab-a-b.    \label{g(a,b;p)}
    	\end{equation}
    	
    	In general, we have $d\big( g(A;s);A \big) \leq s$, but in the case $|A| = 2$ followed by Theorem 1 in Beck and Robins \cite{Beck_Robins} one can show that actually $d\big( g(A;s);A \big) = s$. Similar to the $s=0$ case, exact formulas for the generalized Frobenius number in the cases $k\geq 3$ are still unknown. For $k=3$ exact formulas are just known for special cases. For example, there are explicit results in the case of triangular numbers \cite{Komatsu-triangular}, repunits \cite{Komatsu-The Frobenius number repunits} and Fibonacci numbers \cite{Komatsu-Ying-Frob_Fibonacci}. Recently, Binner \cite{Binner-binner2021bounds} provide bounds for the number of solutions $a_{1}x_{1}+ a_{2}x_{2} + a_{3}x_{3} = n$ and use these bounds to solve $g(a_{1},a_{2},a_{3};s)$ when $s$ is large.  In 2022, Woods \cite{Woods-woods2022generalized} provide formulas and asymptotics for the generalized Frobenius problem using the restricted partition function.
	
	Although obtaining a general formula for three or more variables might be challenging or even impossible, this work presents Theorem \ref{Main-Thm}, which significantly generalizes the results of Komatsu \cite{Komatsu-triangular} considering three variables with certain divisibility conditions. 
	
	\begin{theorem}\label{Main-Thm}
		Let $a_{1}, a_{2}$ and $a_{3}$ be positive integers with $\gcd(a_{1},a_{2},a_{3}) = 1$ and let $t\in\mathbb{Z}_{\geq0}$. 
		If $d_{1} = gcd(a_{2},a_{3})$ and suppose that $a_{1} \equiv 0\pmod{\frac{a_{2}}{d_{1}}}$ or $a_{1} \equiv 0\pmod{\frac{a_{3}}{d_{1}}}$, then
		\begin{equation*}
			g\bigg( 
			a_{1}, a_{2}, a_{3}; \sum_{j=0}^{t}\left\lceil\frac{j a_{2}a_{3}}{a_{1}d_{1}^{2}} \right\rceil \bigg)
			=
			(t+1)\frac{a_{2}a_{3}}{d_{1}} + a_{1}d_{1} - a_{1} - a_{2} - a_{3}.
		\end{equation*}
	\end{theorem}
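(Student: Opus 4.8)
The plan is to convert the three–variable counting function $d(n;a_1,a_2,a_3)$ into an explicit lattice–point count, determine exactly the threshold value of that count, and then maximize the admissible $n$ over residue classes. To set up, write $a_2=d_1b_2$ and $a_3=d_1b_3$ with $\gcd(b_2,b_3)=1$, and note $\gcd(a_1,d_1)=1$ since $d_1\mid a_2,a_3$ while $\gcd(a_1,a_2,a_3)=1$. By the symmetry $a_2\leftrightarrow a_3$ of the statement I may assume the hypothesis in the form $(a_2/d_1)\mid a_1$, i.e.\ $a_1=b_2c$ for some positive integer $c$; then $\gcd(a_1,a_2,a_3)=1$ forces $\gcd(b_2,d_1)=1$, hence also $\gcd(d_1b_3,b_2)=1$. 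A direct computation gives $a_2a_3/(a_1d_1^{2})=b_3/c$, so the index appearing in the theorem is $S_s:=\sum_{j=0}^{s}\lceil jb_3/c\rceil$, and the claimed value rewrites as $(d_1-1)a_1+(b_2-1)a_3+d_1b_2(sb_3-1)$.

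Next I would establish the representation formula. Fix $n\in\mathbb Z$. Reducing $x_1a_1+x_2a_2+x_3a_3=n$ modulo $d_1$ (and using $\gcd(a_1,d_1)=1$) pins down $x_1$ modulo $d_1$; reducing modulo $b_2$ (and using $\gcd(d_1b_3,b_2)=1$) pins down $x_3$ modulo $b_2$. Writing $x_1=v+id_1$ and $x_3=w+jb_2$ with $v\in\{0,\dots,d_1-1\}$, $w\in\{0,\dots,b_2-1\}$ the forced residues (depending only on $n$) and $i,j\ge 0$, the coefficient $x_2$ is then determined; a Chinese Remainder Theorem argument (this is where $\gcd(d_1,b_2)=1$ is used) shows $x_2$ is automatically an integer, and it is non-negative exactly when $(v+id_1)a_1+(w+jb_2)a_3\le n$. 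This yields a bijection between the representations of $n$ and the pairs $(i,j)\in\mathbb Z_{\ge0}^{2}$ with $ic+jb_3\le M(n)$, where $M(n):=\big(n-va_1-wa_3\big)/(d_1b_2)\in\mathbb Z$. Hence $d(n;A)=F(M(n))$, where $F(M):=\#\{(i,j)\in\mathbb Z_{\ge0}^{2}:ic+jb_3\le M\}$ (with $F(M)=0$ for $M<0$).

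Then I would identify the threshold and maximize. The function $F$ is nondecreasing in $M$, and grouping lattice points by the value of $j$, together with the identity $\lceil m/c\rceil=\lfloor(m-1)/c\rfloor+1$ valid for $m\ge 1$, yields
\begin{equation*}
F(sb_3-1)=\sum_{j=0}^{s-1}\left\lceil\frac{(s-j)b_3}{c}\right\rceil=\sum_{j=0}^{s}\left\lceil\frac{jb_3}{c}\right\rceil=S_s ,
\end{equation*}
whereas the point $(0,s)$ lies in the region for $M=sb_3$ but not for $M=sb_3-1$, so $F(sb_3)\ge S_s+1$; by monotonicity this gives the key equivalence $d(n;A)\le S_s\iff M(n)\le sb_3-1$. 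For each residue class the largest $n$ with $M(n)\le sb_3-1$ is $n=va_1+wa_3+d_1b_2(sb_3-1)$, and maximizing over $v,w$ forces $v=d_1-1$, $w=b_2-1$, so that $g(A;S_s)=(d_1-1)a_1+(b_2-1)a_3+d_1b_2(sb_3-1)$, which simplifies to $(s+1)a_2a_3/d_1+a_1d_1-a_1-a_2-a_3$.

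I expect the representation formula to be the crux: one must verify that the congruence constraints on $x_1$ and $x_3$ together with the non-negativity of $x_2$ are simultaneously consistent and leave exactly the single parameter pair $(i,j)$ free, and this is precisely where the divisibility hypothesis $(a_2/d_1)\mid a_1$ is essential (through $\gcd(b_2,d_1)=1$, without which the reduction modulo $b_2$ no longer isolates $x_3$). Once $d(n;A)=F(M(n))$ is in hand, the exact evaluation $F(sb_3-1)=S_s$ and the final maximization over $(v,w)$ are essentially bookkeeping, and one should also check the degenerate cases $s=0$ (recovering the Brauer–Johnson reduction of the ordinary Frobenius number) and small $b_2$ or $b_3$.
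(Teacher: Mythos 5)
Your proposal is correct, and it takes a genuinely different route from the paper. The paper first applies Beck--Kifer's reduction lemma to pass from $(a_1,a_2,a_3)$ to $(a_1,a_2/d_1,a_3/d_1)$, then writes $d(m;a_1,\tfrac{a_2}{d_1},\tfrac{a_3}{d_1})=\sum_{j}d(m-ja_1;\tfrac{a_2}{d_1},\tfrac{a_3}{d_1})$ and evaluates this sum at $m=g(\tfrac{a_2}{d_1},\tfrac{a_3}{d_1};s)$ by means of two lemmas that locate, for each $i$, exactly which shifts $g_s-ja_1$ have precisely $i$ two-variable representations; the count $\lceil(s+1-i)\tfrac{a_2a_3}{a_1d_1^2}\rceil-\lceil(s-i)\tfrac{a_2a_3}{a_1d_1^2}\rceil$ then telescopes to the stated index. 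You instead bypass the reduction lemma entirely and parametrize \emph{all} representations of $n$ at once: the congruences modulo $d_1$ and modulo $b_2=a_2/d_1$ pin down $x_1$ and $x_3$ up to the free parameters $(i,j)$, giving the clean identity $d(n;A)=F(M(n))$ with $F$ an explicit nondecreasing lattice-point counter, after which the theorem is a threshold computation $F(sb_3-1)=S_s<F(sb_3)$ plus a maximization over the $d_1b_2$ residue classes. Your route is more self-contained (no appeal to \cite{Beck_Kifer}) and, notably, it makes fully rigorous the step the paper treats rather tersely --- namely \emph{why} $g(\tfrac{a_2}{d_1},\tfrac{a_3}{d_1};s)$ is the \emph{largest} $m$ with at most $S_s$ representations: in your setup this is immediate from the monotonicity of $F$ and the strict jump at $M=sb_3$, whereas the paper only verifies the count at the candidate value $m$ and asserts maximality "by the choice of $m$." The paper's approach, in exchange, isolates reusable two-variable lemmas (its Lemma \ref{Lemma: d(g-jc) <= s} and Lemma \ref{lemma: d(g_s-jc) = i iff g_i-1 < g_s-jc <= g_i}) that it exploits again in Section \ref{section: special cases}. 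All of your intermediate identities check out, including $a_2a_3/(a_1d_1^2)=b_3/c$, the CRT integrality of $x_2$ and of $M(n)$, the evaluation $F(sb_3-1)=\sum_{j=1}^{s}\lceil jb_3/c\rceil=S_s$ via $\lceil m/c\rceil=\lfloor(m-1)/c\rfloor+1$, and the final algebraic simplification to $(s+1)\tfrac{a_2a_3}{d_1}+a_1d_1-a_1-a_2-a_3$.
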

		\begin{remark}\label{remark: after thm1}
		In Theorem \ref{Main-Thm}, the order of integers in a tuple $(a_{1},a_{2},a_{3})$ is not necessary due to symmetry of $g$. So, if $d_{2} = \gcd(a_{1},a_{3})$ and $a_{2} \equiv 0\pmod{\frac{a_{1}}{d_{2}}}$ or $a_{2} \equiv 0\pmod{\frac{a_{3}}{d_{2}}}$, then
		\begin{equation*}
			g\bigg( 
			a_{1}, a_{2}, a_{3}; \sum_{j=0}^{t}\left\lceil\frac{j a_{1}a_{3}}{a_{2}d_{2}^{2}} \right\rceil \bigg)
			=
			(t+1)\frac{a_{1}a_{3}}{d_{2}} + a_{2}d_{2} - a_{1} - a_{2} - a_{3}.
		\end{equation*}	
		Similarly, if $d_{3} = \gcd(a_{1},a_{2})$ and $a_{3} \equiv 0\pmod{\frac{a_{1}}{d_{3}}}$ or $a_{3} \equiv 0\pmod{\frac{a_{2}}{d_{3}}}$, then
		\begin{equation*}
			g\bigg(	a_{1}, a_{2}, a_{3}; \sum_{j=0}^{t}\left\lceil\frac{j a_{1}a_{2}}{a_{3}d_{1}^{2}} \right\rceil \bigg)	=
			(t+1)\frac{a_{1}a_{2}}{d_{3}} + a_{3}d_{3} - a_{1} - a_{2} - a_{3}.
		\end{equation*}
		\end{remark}
	\begin{remark}
		Notice that Theorem \ref{Main-Thm} may not cover all cases of $g(a_1,a_2,a_3;s)$ for all $s \in\mathbb{Z}_{\geq 0}$. If we let
		\begin{equation*}
			\mathbf{U}_{(a_{1},a_{2},a_{3})} := \bigcup_{i=1}^{3}\Bigg\{ \sum_{j=0}^{t}\left\lceil\frac{j \prod_{\substack{1 \leq \ell  \leq 3 \\  \ell \neq i}}a_{\ell}}{a_{i}d_{i}^{2}} \right\rceil  \mid t \geq 0 \Bigg\} 
			\subseteq \mathbb{Z}_{\geq 0}, 
		\end{equation*}
		 and consider $(a_{1},a_{2},a_{3}) = (10,15,21)$ as in Example \ref{ex: triangular numbers}.
		Since $d(120;10,15,21) = 6$, but 
		$$ 6 \not\in\mathbf{U}_{(10,15,21)}  = \{0, 1, 2, 3, 4, 5, 7, 9, 11, 14, 17, 20, 22, 24,\ldots\}.
        $$
		And another example, if $a_{1}, a_{2}$ and $a_{3}$ are of the form in Corollary \ref{Cor-2<=m1 < m2 < m3}, then we obtain that
		\begin{equation*}
			\mathbf{U}_{(a_{1},a_{2},a_{3})} = \{1,3,6,10,15,21,28,36,45,55,66,78, \ldots\} = \{t_{k} \mid k\in\mathbb{Z}_{\geq 0} \},
		\end{equation*}
		where $t_{k}$ is the $k$th triangular number which is given by $t_{k} = \binom{k+1}{2}$.
	\end{remark}
	We will give the proof in Section \ref{Section-ProofThm1}. Then, in Section \ref{section: special cases}, we discuss some special cases of Theorem \ref{Main-Thm} and relate them to previous study.

	%

\vspace{0.5cm}

{\bf Acknowledgement:} This project was supported by the Development and Promotion for Science and Technology Talents Project (DPST), Thailand. I extend my sincere appreciation to Prof. Henrik Bachmann and Prof. Kohji Matsumoto for their invaluable guidance and support as my supervisors. I would like to thank the referee for helpful comments and corrections. 

\section{Preliminary Lemmas}
	
	Before proving Theorem \ref{Main-Thm}, we introduce some Lemmas. Beck and Kifer \cite{Beck_Kifer} show the following result on $g(a_{1}, a_{2}, \ldots, a_{k};s)$ in terms of $\ell = \gcd(a_{2}, a_{3}, \ldots, a_{k}) $.
	\begin{lemma}\label{lemma-from-Beck-Kifer}
		\cite[Lemma 4]{Beck_Kifer}
		For $k\geq 2$, let $A =(a_{1}, \ldots, a_{k})$ be a $k$-tuple of positive integers with $\gcd(A) = 1$. If $\ell = \gcd(a_{2}, a_{3}, \ldots, a_{k})$, let $a_{j} = \ell a_{j}'$ for $2 \leq j \leq k$. Then for $s\geq 0$
		\begin{align*}
			g(a_{1}, a_{2}, \ldots, a_{k}; s) &= \ell \, g\big(a_{1}, a_{2}', a_{3}', \ldots, a_{k}'; s\big) + a_{1}(\ell -1).
		\end{align*}
	\end{lemma}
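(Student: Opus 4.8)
The plan is to count representations fiber by fiber according to the coefficient of $a_{1}$ modulo $\ell$. First I would record that $\gcd(a_{1},\ell)=1$, since $\gcd(a_{1},\ell)=\gcd\bigl(a_{1},\gcd(a_{2},\dots,a_{k})\bigr)=\gcd(a_{1},\dots,a_{k})=1$; in particular $a_{1}$ is invertible modulo $\ell$, so for every integer $n$ there is a unique $r(n)\in\{0,1,\dots,\ell-1\}$ with $a_{1}r(n)\equiv n\pmod{\ell}$. Put $N(n)=\bigl(n-a_{1}r(n)\bigr)/\ell$, an integer (possibly negative, in which case both sides below are $0$).

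The key step is the identity
\begin{equation*}
	d(n;a_{1},a_{2},\dots,a_{k})=d\bigl(N(n);a_{1},a_{2}',\dots,a_{k}'\bigr)\qquad\text{for every }n\in\mathbb{Z}.
\end{equation*}
To prove it, start from a representation $n=a_{1}x_{1}+a_{2}x_{2}+\dots+a_{k}x_{k}=a_{1}x_{1}+\ell\bigl(a_{2}'x_{2}+\dots+a_{k}'x_{k}\bigr)$: the divisibility $\ell\mid n-a_{1}x_{1}$ forces $x_{1}\equiv r(n)\pmod{\ell}$, hence $x_{1}=r(n)+i\ell$ for some $i\ge 0$, and then $(x_{2},\dots,x_{k})$ must satisfy $a_{2}'x_{2}+\dots+a_{k}'x_{k}=N(n)-a_{1}i$. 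Summing over $i\ge 0$ gives $d(n;a_{1},\dots,a_{k})=\sum_{i\ge 0}d\bigl(N(n)-a_{1}i;a_{2}',\dots,a_{k}'\bigr)$, and the right-hand side is exactly $d\bigl(N(n);a_{1},a_{2}',\dots,a_{k}'\bigr)$ after grouping the representations of $N(n)$ by the coefficient $i$ of $a_{1}$. This bijective bookkeeping is the only delicate point; I expect it to be the crux of the argument.

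With the identity in hand the extremal part is short. Write $G=g(a_{1},a_{2}',\dots,a_{k}';s)$ and $n_{0}=\ell G+a_{1}(\ell-1)$. Since $a_{1}(\ell-1)\equiv n_{0}\pmod{\ell}$ we have $r(n_{0})=\ell-1$ and $N(n_{0})=G$, so $d(n_{0};a_{1},\dots,a_{k})=d(G;a_{1},a_{2}',\dots,a_{k}')\le s$, which shows $g(a_{1},\dots,a_{k};s)\ge n_{0}$. Conversely, for any $n>n_{0}$ the bound $r(n)\le\ell-1$ gives $N(n)=\bigl(n-a_{1}r(n)\bigr)/\ell\ge\bigl(n-a_{1}(\ell-1)\bigr)/\ell>G$, so by maximality of $G$ we get $d\bigl(N(n);a_{1},a_{2}',\dots,a_{k}'\bigr)\ge s+1$ and hence $d(n;a_{1},\dots,a_{k})\ge s+1$. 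Therefore $n_{0}$ is the largest integer with at most $s$ representations, i.e. $g(a_{1},\dots,a_{k};s)=\ell\,g(a_{1},a_{2}',\dots,a_{k}';s)+a_{1}(\ell-1)$, as claimed. (As a consistency check, for $k=2$ this recovers \eqref{g(a,b;p)} via $g(a_{1},1;s)=a_{1}s-1$.)
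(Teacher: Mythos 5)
Your proof is correct. Note that the paper does not prove this lemma at all --- it is quoted from Beck and Kifer \cite[Lemma 4]{Beck_Kifer} --- so there is no internal argument to compare against; your write-up is a complete, self-contained proof of the cited result. The mechanism you use is the standard one (and essentially the one in the original reference): since $\gcd(a_{1},\ell)=1$, the coefficient $x_{1}$ in any representation of $n$ is forced into a single residue class modulo $\ell$, which yields the identity $d(n;a_{1},\dots,a_{k})=d\bigl(N(n);a_{1},a_{2}',\dots,a_{k}'\bigr)$ via the same ``group by the coefficient of $a_{1}$'' bookkeeping that the paper itself uses later in \eqref{eq:the numbe d(m) = sum d(m-ja)}; the extremal step ($r(n_{0})=\ell-1$ maximizes the shift, and $N$ is monotone along each residue class) is handled correctly, and your $k=2$ consistency check with \eqref{g(a,b;p)} goes through.
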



	The next lemma give an upper bound for the number of representations to $a_{1}x_{1} + \cdots + a_{k}x_{k} = g(a_{1}, \ldots, a_{k};s) - jc$, for all integers $j$ such that $0 \leq jc \leq g(a_{1}, \ldots, a_{k};s)$ when $c \equiv 0\pmod{a_{r}}$ for some  $r \in \{1,\ldots,k\}$.	
	\begin{lemma}\label{Lemma: d(g-jc) <= s}
		For $k\geq 2$, let $A = (a_{1}, \ldots, a_{k})$ be a $k$-tuple of positive integers with $\gcd(A) = 1$ and let $s \in \mathbb{Z}_{\geq 0}$. If $c$ is a positive integer such that $c \equiv 0 \pmod {a_{r}}$ for some $r = 1,\ldots,k$, then, for all integers $0 \leq jc \leq g(A;s)$, 
		$$
		d\big( g( A ;s)-jc ; A\big) \leq s.
		$$ 
	\end{lemma}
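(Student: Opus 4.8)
The plan is to reduce the statement to the single inequality
\[
d\big(g(A;s)-jc;A\big)\ \le\ d\big(g(A;s);A\big),
\]
after which we are done, because the right-hand side is at most $s$ directly from the definition of $g(A;s)$ as the largest integer $n$ with $d(n;A)\le s$: in particular $g(A;s)$ itself belongs to that set, so $d\big(g(A;s);A\big)\le s$.

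To prove the displayed inequality I would use the divisibility hypothesis to construct an explicit injection between the two solution sets. Write $c=a_{r}t$ with $t\in\mathbb{Z}_{>0}$; this is the only place the assumption $c\equiv 0\pmod{a_{r}}$ enters. Put $N=g(A;s)$ and fix an integer $j$ with $0\le jc\le N$. Given any non-negative integer solution $(x_{1},\dots,x_{k})$ of $a_{1}x_{1}+\cdots+a_{k}x_{k}=N-jc$, replace $x_{r}$ by $x_{r}+jt$ and leave the remaining coordinates unchanged; the resulting tuple is again non-negative, and
\[
a_{1}x_{1}+\cdots+a_{r}(x_{r}+jt)+\cdots+a_{k}x_{k}=(N-jc)+a_{r}\cdot jt=N,
\]
so it is a representation of $N$. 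This map is a translation of the $r$-th coordinate by the fixed amount $jt$, hence injective, which gives $d(N-jc;A)\le d(N;A)$. Combining with the previous paragraph yields $d\big(g(A;s)-jc;A\big)\le s$. Note that the hypothesis $0\le jc\le N$ guarantees $N-jc\ge 0$, so the left-hand side is a genuine representation count; if no such $j$ exists the assertion is vacuous.

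I do not expect a real obstacle here: the entire argument is the observation that adjoining $jt$ copies of the generator $a_{r}$ turns a representation of $N-jc$ into one of $N$. The only point requiring care is that the shift amount $jc/a_{r}=jt$ must be an integer for the construction to make sense, and that is precisely what the divisibility hypothesis $a_{r}\mid c$ supplies.
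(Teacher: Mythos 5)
Your proof is correct and is essentially the paper's argument: the paper runs the same coordinate-shift $x_r \mapsto x_r + jc/a_r$ but packages it as a proof by contradiction (assuming $d(g(A;s)-jc;A)\ge s+1$ and deducing $s+1$ representations of $g(A;s)$), whereas you state it directly as an injection giving $d(g(A;s)-jc;A)\le d(g(A;s);A)\le s$. Your direct phrasing is, if anything, slightly cleaner.
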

	\begin{proof}
		Suppose that $c\in\mathbb{Z}_{>0}$ such that $c \equiv 0\pmod {a_{r}}$ for some $r =1,\ldots,k$. 
		Assume that there exists $0 \leq j \leq \frac{g(A;s)}{c}$ such that
		\begin{equation*}
			d\big(g(A;s)-jc; A\big)
			\geq s+1. 
		\end{equation*}
		So, there are \emph{at least} $s+1$ non-negative integer solutions $(x_{1}, \ldots, x_{k})$ such that
		\begin{equation*}
			g(A;s)-jc
			=
			\sum_{\ell=1}^{k}x_{\ell}a_{\ell}.
		\end{equation*}
		Since $c \equiv 0 \pmod {a_{r}}$, then $ c =a_{r}q$ for some $q\in\mathbb{Z}_{\geq 0}$. So, we obtain that
		\begin{equation*}
			g(A;s)
			=
			x_{1}a_{1} + \cdots + x_{r-1}a_{r-1} + (x_{r}+jq)a_{r} + x_{r+1}a_{r+1} + \cdots + x_{k}a_{k},
		\end{equation*}
		this means that $g(A;s)$ has \emph{at least} $s+1$ non-negative representations in terms of $a_{1}, \ldots, a_{k}$. We get a contradiction since $g(A;s)$ must have at most $s$ representations. 
	\end{proof}
	
	To accomplish the proof of Theorem \ref{Main-Thm}, we need the following lemma. 
	If $k = 2$, says $A = (a,b)$, then, for a non-negative number $j \leq g(a,b;s)/c$, $d\big(g(a,b;s)-jc; a,b\big) = i$ is equivalent to $g(a,b;i-1) < g(a,b;s) - jc \leq g(a,b;i)$.
	
	\begin{lemma}\label{lemma: d(g_s-jc) = i iff g_i-1 < g_s-jc <= g_i}
		Let $a,b\in\mathbb{Z}_{>0}$ with $\gcd(a,b) = 1$, $s \in \mathbb{Z}_{\geq0}$, $i \in\{0,1,\ldots,s\}$. Suppose that $c$ is a positive integer such that $c  \equiv 0 \pmod a$ or $c \equiv 0 \pmod b$ and $j$ is a non-negative integer such that $j \leq \frac{g(a,b;s)}{c}$. Then
		\begin{equation*}
			d\big( g( a, b ;s)-jc ; a,b\big) = i,
		\end{equation*}
		if and only if,
		\begin{equation*}			
			g(a, b ;i-1) < g( a, b ;s)-jc \leq g(a, b ;i).
		\end{equation*}
		Here we set $g(a, b ;-1)$ to be $-2$.
	\end{lemma}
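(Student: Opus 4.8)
The plan is to reduce the lemma to a single cleaner equivalence. Writing $N := g(a,b;s) - jc$, the hypothesis $0 \le j \le g(a,b;s)/c$ gives $0 \le N \le g(a,b;s)$, so in particular $N \ge 0$. I would then prove that for every integer $i \ge -1$ (with the convention $g(a,b;-1) = -2$),
\[
d(N;a,b) \le i \quad\Longleftrightarrow\quad N \le g(a,b;i).
\]
Granting this, the lemma is immediate: $d(N;a,b) = i$ means ``$d(N;a,b)\le i$ and $d(N;a,b)\not\le i-1$'', which by the displayed equivalence applied at $i$ and at $i-1$ is exactly $g(a,b;i-1) < N \le g(a,b;i)$. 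I would stress that this equivalence is special to $N$ of the restricted form $g(a,b;s)-jc$ with $c$ subject to the congruence hypothesis: for an arbitrary $n$ only the implication $d(n;a,b)\le i \Rightarrow n\le g(a,b;i)$ is automatic (it says no more than that $g(a,b;i)$ is the maximum of $\{n: d(n;a,b)\le i\}$), while the converse can fail, e.g. for $n=0$, $(a,b)=(3,5)$, $i=0$.

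The first real step is to use the congruence hypothesis to pin $N$ down modulo $a$ or $b$. Since $g(a,b;s)$ and $d(n;a,b)$ are symmetric in $a,b$, I may assume $c \equiv 0\pmod a$, say $c=aq$ with $q\in\mathbb{Z}_{>0}$. Then $N \equiv g(a,b;s)\pmod a$, and by \eqref{g(a,b;p)} we have $g(a,b;s) = a\big((s+1)b-1\big)-b\equiv -b\pmod a$; hence $a\mid N+b$, and I set $n' := (N+b)/a$, a positive integer (since $N\ge 0$ and $b\ge 1$).

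Next I would compute $d(N;a,b)$ outright. If $ax+by=N$ with $x,y\ge 0$, then $by\equiv N\equiv -b\pmod a$ forces $y\equiv -1\pmod a$ (using $\gcd(a,b)=1$), so $y=am-1$ for a unique $m\in\mathbb{Z}_{>0}$, and then $x=(N-by)/a=n'-bm$. Since $x\ge 0\iff m\le n'/b$, this gives a bijection between the representations of $N$ and $\{m\in\mathbb{Z}_{>0}:m\le n'/b\}$, so $d(N;a,b)=\lfloor n'/b\rfloor$. Writing $g(a,b;i)=a\big((i+1)b-1\big)-b$ for $i\ge 0$ from \eqref{g(a,b;p)}, I then get, for $i\ge 0$,
\[
N\le g(a,b;i)\iff n'\le (i+1)b-1\iff \lfloor n'/b\rfloor\le i\iff d(N;a,b)\le i,
\]
while for $i=-1$ both sides are false since $N\ge 0>-2$ and $d(N;a,b)\ge 0$. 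This establishes the displayed equivalence, and with it the lemma.

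The only step with any content — and the one I would flag as the crux — is the observation that the congruence on $c$ forces $N$ into one residue class modulo $a$ (or $b$) that does not depend on $j$; it is exactly on that residue class that $d(\,\cdot\,;a,b)$ is the honest nondecreasing step function $\lfloor(\,\cdot\,+b)/(ab)\rfloor$, which is what makes ``$N\le g(a,b;i)$'' and ``$d(N;a,b)\le i$'' equivalent. Without the congruence hypothesis both this equivalence and the lemma itself break down. Everything past that observation is the routine parametrization of the solution set of a two-variable linear equation together with the closed form \eqref{g(a,b;p)}.
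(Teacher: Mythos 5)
Your proof is correct, and it takes a genuinely different route from the paper's. The paper argues both directions by contradiction: writing $g_s = g(a,b;s)$, it observes that the gap $\Delta$ between $g_s - jc$ and the relevant threshold $g_{i-1}$ or $g_i$ is a \emph{positive multiple of $a$} (because $c\equiv 0\pmod a$ and consecutive values $g_i$ differ by $ab$), so any representation of $g_s-jc$ can be shifted by $\Delta/a$ copies of $a$ to produce too many representations of $g_{i-1}$ or $g_i$. You instead exploit the same congruence observation to compute $d(N;a,b)$ in closed form on the residue class $N\equiv -b\pmod a$: parametrizing solutions of $ax+by=N$ by $y=am-1$ gives $d(N;a,b)=\lfloor (N+b)/(ab)\rfloor$, from which the two-sided inequality is an immediate rewriting of $d(N;a,b)\le i$ and $d(N;a,b)>i-1$ via \eqref{g(a,b;p)}. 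Your version is more quantitative --- it identifies exactly where the congruence hypothesis enters (the equivalence $d(n;a,b)\le i\iff n\le g(a,b;i)$ holds on that residue class and fails off it, as your $n=0$, $(a,b)=(3,5)$ example shows) and yields the monotonicity used later in the paper's Proposition 5 as a free by-product --- while the paper's shifting argument avoids any explicit solution count and generalizes more directly in spirit to Lemma \ref{Lemma: d(g-jc) <= s}. All steps check out: the bijection with $\{m\in\mathbb{Z}_{>0}: m\le n'/b\}$ is valid since $\gcd(a,b)=1$ forces $y\equiv -1\pmod a$, and the hypothesis $0\le jc\le g(a,b;s)$ is used exactly where needed, to guarantee $N\ge 0$ so that $n'\ge 1$ and the $i=-1$ base case holds.
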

	\begin{proof}
		Without loss of generality, we only prove the case $c \equiv 0 \pmod{a}$. For convenient, throughout the proof, for $s \geq 0$, we let $g_{s} := g(a,b;s)$, which by \eqref{eq:twovariableclassical} is $g_{s} = (s+1)ab-a-b$.
		
		$(\Rightarrow)$ Suppose that $d(g_{s}-jc; a, b) = i$ for some $i = 0,1,\ldots,s$.
		By the definition of $g_{i}$, it follows immediately that 
		$g_{s}-jc \leq g_i$. Clearly, if $i=0$, then $-2 < g_{s}-jc \leq g_{i}$, we are done. So, assume that $i\geq1$. If $g_s-jc = g_{i-1}$, then $d(g_{s}-jc; a, b) = i-1$, a contradiction. It remains to show that 
		\begin{equation*}
			g_{i-1}  < g_{s}-jc.
		\end{equation*}
		We will prove this statement by assuming that $g_{s} -jc< g_{i-1}$. 
		Since $d(g_{s}-jc; a, b) = i$, by the definition of $d$, there are $i$ non-negative integer solutions $(x,y)$ such that
		\begin{equation*}
			g_{s}-jc = xa + yb
		\end{equation*}
		If we let $\Delta = g_{i-1} - g_{s} + jc $, then $\Delta > 0$ and 
		\begin{equation*}
			\Delta = 
			jc 
			- (s+1-i)(ab).
		\end{equation*}
		Since $c = ak$ for some $k\in\mathbb{Z}_{>0}$, it follows that
		\begin{equation*}
			\Delta
			=
			\big(jk-(s+1-i)b\big)a.
		\end{equation*}
		Therefore, we obtain
		\begin{equation*}
			g_{i-1} = g_{s}-jc + \Delta = \big(x+jk-(s+1-i)b\big)a + yb,
		\end{equation*}
		this implies that $g_{i-1} = g(a,b; i-1)$ has at least $i$ representations in terms of $a$ and $b$, a contradiction. Therefore, $g_{i-1} < g_{s}-jc \leq g_{i}$.
		
		$(\Leftarrow)$ Suppose that $g_{i-1} < g_{s}-jc \leq g_{i}$ for some $i=0,1,2,\ldots,s$. Clearly, since $g_{i-1} < g_{s}-jc$, we have
		\begin{equation*}
			d(g_{s}-jc; a, b) \geq i.
		\end{equation*}
		Our goal is to show that $d(g_{s}-jc; a, b) = i$. If $g_{s}-jc = g_{i}$, then we are done. So we assume that $g_{s}-jc < g_{i}$ and also assume that $d(g_{s}-jc; a, b) > i$. Then, there are \emph{at least} $i+1$ non-negative integer solutions $(x,y)$ such that
		\begin{equation*}
			g_{s}-jc = xa + yb.
		\end{equation*}
		Let $\Delta = g_{i} - g_{s}+jc$. Then $\Delta > 0$ and 
		\begin{equation*}
			\Delta
			=
			jc - (s-i)ab.
		\end{equation*}
		We have $c = ka$ for some $k\in\mathbb{Z}_{>0}$. Thus,
		\begin{equation*}
			\Delta = \big(jk - (s-i)b\big)a.
		\end{equation*}
	Therefore, we obtain that
		\begin{equation*}
			g_{i} = g_{s}-jc + \Delta = \big(x+jk - (s-i)b\big)a + yb,
		\end{equation*}
		i.e., $g_{i}$ has \emph{at least} $i+1$ representations in terms of $a$ and $b$, a contradiction. Therefore,
		\begin{equation*}
			d(g_{s}-jc; a, b) = i.
		\end{equation*}
		Similarly to that, we can prove the case where $c \equiv 0 \pmod{b}$.
	\end{proof}	
     \begin{lemma} \label{lemma for optimal m}
		Let $a,b \in \mathbb{Z}_{>0}$ with $a<b$, $\gcd(a,b) = 1$, and let $s, K \in \mathbb{Z}_{\geq 0}$. If m is an integer such that $m > g(a,b;s) + Ka$, then, for all $j \in \mathbb{Z}_{\geq 0}$, we have
		\begin{equation*}
			d\big( m - ja; a,b\big) \geq d\big(g(a,b;s) + (K -j)a; a,b\big).
		\end{equation*}
	\end{lemma}
	\begin{proof}
		We will prove by induction on $j$. If $j = 0$, we can assume that there exists a non-negative integer $\ell$ such that $d\big(g(a,b;s) + Ka; a,b\big) = \ell$. By Lemma \ref{lemma: d(g_s-jc) = i iff g_i-1 < g_s-jc <= g_i}, we have 
		\begin{equation*}
			g(a,b; \ell-1) < g(a,b;s) + Ka \leq g(a,b; \ell). 
		\end{equation*}
		Hence $m > g(a,b;\ell-1)$, which means
		\begin{equation*}
			d(m ; a,b) \geq \ell = d\big(g(a,b;s) + Ka; a,b\big).
		\end{equation*}
		The base step is proved. So let $j$ be a non-negative integer and assume that
		\begin{equation*}
			d\big( m - ja; a,b\big) \geq d\big(g(a,b;s) + (K -j)a; a,b\big).
		\end{equation*}
		We want to show that
		\begin{equation*}
			d\big( m - (j+1)a; a,b\big) \geq d\big(g(a,b;s) + (K -(j+1))a; a,b\big).
		\end{equation*}
		Assume that $d\big( m - (j+1)a; a,b\big) < d\big(g(a,b;s) + (K -(j+1))a; a,b\big)$.
		Suppose that there exists $\ell \geq 0$ such that $d\big( m - (j+1)a; a,b\big) = \ell$. So
		\begin{equation*}
			\ell < d\big(g(a,b;s) + (K -(j+1))a; a,b\big).
		\end{equation*}
		Observe that $d\big(g(a,b;s) + (K -(j+1))a; a,b\big) \leq d\big(g(a,b;s)+(K-j)a; a,b\big)$ by Lemma \ref{lemma: d(g_s-jc) = i iff g_i-1 < g_s-jc <= g_i}.
		One can see that since $d\big( m - (j+1)a; a,b\big) = \ell$, then
		\begin{equation*}
			d\big( m - ja; a,b\big)
			=
			\begin{cases}
				\ell + 1 	&\text{ if } b \mid (m - ja),
				\\
				\ell		&\text{ otherwise.}
			\end{cases}
		\end{equation*}
		If $ d\big( m - ja; a,b\big) = \ell$, then we get a contradiction that 
		\begin{equation*}
			\ell = d\big( m - ja; a,b\big) \geq d\big(g(a,b;s)+ (K-j)a; a,b\big) \geq  d\big(g(a,b;s) + (K-(j+1))a; a,b\big) > \ell.
		\end{equation*}
		If $d\big( m - ja; a,b\big) = \ell+1$, then 
		\begin{equation*}
			\ell < d\big(g(a,b;s) + (K -(j+1))a; a,b\big) \leq d\big(g(a,b;s)+(K-j)a; a,b\big) \leq d\big( m - ja; a,b\big) = \ell+1.
		\end{equation*}
		It follows that
		\begin{equation*}
			d\big(g(a,b;s) + (K -(j+1))a; a,b\big) = d\big(g(a,b;s)+(K-j)a; a,b\big) = d\big( m - ja; a,b\big) = \ell +1.
		\end{equation*}
		Since $d\big(g(a,b;s) + (K -(j+1))a; a,b\big) = \ell +1$, by Lemma \ref{lemma: d(g_s-jc) = i iff g_i-1 < g_s-jc <= g_i}, $g(a,b;s) + (K -(j+1))a > g(a,b;\ell)$. This means $ m - (j+1)a > g(a,b;s) + (K -(j+1))a > g(a,b;\ell)$, thus
		\begin{equation*}
			d\big( m - (j+1)a; a,b\big) \geq \ell +1,
		\end{equation*}
		which contradicts with $d(m-(j+1)a; a,b) = \ell$. Therefore, 
        \begin{equation*}
        d\big( m - (j+1)a; a,b\big) \geq d\big(g(a,b;s) + (K - (j+1))a; a,b\big).
        \tag*{\qedhere}
        \end{equation*}
	\end{proof}
	\section{Proof of Theorem \ref{Main-Thm}}\label{Section-ProofThm1}
	By applying Lemma \ref{lemma-from-Beck-Kifer}, Lemma \ref{Lemma: d(g-jc) <= s} and Lemma \ref{lemma: d(g_s-jc) = i iff g_i-1 < g_s-jc <= g_i}, we can prove Theorem \ref{Main-Thm} as follows.
	\begin{proof}[Proof of Theorem \ref{Main-Thm}]
		Suppose that $d_{1} = \gcd(a_{2},a_{3})$ and $a_{1} \equiv 0\pmod{\frac{a_{2}}{d_{1}}}$.
		By applying Lemma \ref{lemma-from-Beck-Kifer}, we obtain that
		\begin{equation}\label{eq:main point in proof THM}
			g\bigg( 
			a_{1}, a_{2},a_{3}; \sum_{j=0}^{t}\left\lceil\frac{j a_{2}a_{3}}{a_{1}d_{1}^{2}} \right\rceil \bigg)
			=
			d_{1} g\bigg(a_{1}, \frac{a_{2}}{d_{1}}, \frac{a_{3}}{d_{1}}; \sum_{j=0}^{t}\left\lceil\frac{j a_{2}a_{3}}{a_{1}d_{1}^{2}}\right\rceil\bigg)
			+
			a_{1}(d_{1}-1).
		\end{equation}
		We will show that
		\begin{equation}\label{eq-relation}
			g\bigg(a_{1}, \frac{a_{2}}{d_{1}}, \frac{a_{3}}{d_{1}}; \sum_{j=0}^{t}\left\lceil\frac{j a_{2}a_{3}}{a_{1}d_{1}^{2}}\right\rceil\bigg)
			= 
			g\bigg( \frac{a_{2}}{d_{1}}, \frac{a_{3}}{d_{1}} ;t\bigg). 
		\end{equation}
		Then one can see that, for $m \in\mathbb{Z}_{\geq 0}$,
		\begin{align}\label{eq:the numbe d(m) = sum d(m-ja)}
			d \bigg(m; a_{1}, \frac{a_{2}}{d_{1}}, \frac{a_{3}}{d_{1}}\bigg) 
			&= \sum_{j=0}^{\lfloor\frac{m}{a_{1}}\rfloor} d\bigg(m-ja_{1} ; \frac{a_{2}}{d_{1}}, \frac{a_{3}}{d_{1}}\bigg).
		\end{align}
		Put $m = g\bigg( \frac{a_{2}}{d_{1}}, \frac{a_{3}}{d_{1}} ;t\bigg)$, then we obtain
		\begin{align}\label{eq:put m=g_s}
			d \Bigg(g\bigg( \frac{a_{2}}{d_{1}}, \frac{a_{3}}{d_{1}} ;t\bigg); a_{1}, \frac{a_{2}}{d_{1}}, \frac{a_{3}}{d_{1}}\Bigg) 
			= \sum_{j=0}^{\left\lfloor\frac{g\big( \frac{a_{2}}{d_{1}}, \frac{a_{3}}{d_{1}} ;t\big)}{a_{1}}\right\rfloor} d\Bigg(g\bigg( \frac{a_{2}}{d_{1}}, \frac{a_{3}}{d_{1}} ;t\bigg)-ja_{1} ; \frac{a_{2}}{d_{1}}, \frac{a_{3}}{d_{1}}\Bigg).
		\end{align}
		
		By Lemma  \mbox{\ref{Lemma: d(g-jc) <= s}\strut}, we have that each value of $d\bigg(g\big( \frac{a_{2}}{d_{1}}, \frac{a_{3}}{d_{1}} ;t\big)-ja_{1} ; \frac{a_{2}}{d_{1}}, \frac{a_{3}}{d_{1}}\bigg)$ have to be equal to any of $0,1,\ldots, t$. To calculate the right-hand side of \eqref{eq:put m=g_s}, we count the number of $ 0 \leq j \leq g\big( \frac{a_{2}}{d_{1}}, \frac{a_{3}}{d_{1}} ;t\big) / a_{1}$ such that
		\begin{equation}\label{eq:di}
			d\Bigg(g\bigg( \frac{a_{2}}{d_{1}}, \frac{a_{3}}{d_{1}} ;t\bigg)-ja_{1} ; \frac{a_{2}}{d_{1}}, \frac{a_{3}}{d_{1}}\Bigg) = i,
		\end{equation}
		for all $i = 1,2,\ldots, t$. For convenient, for an integer $i$, let $g_{i} := g\big( \frac{a_{2}}{d_{1}}, \frac{a_{3}}{d_{1}} ;i\big)$. For given $i$ the $j$ such that \eqref{eq:di} holds are, by Lemma \ref{lemma: d(g_s-jc) = i iff g_i-1 < g_s-jc <= g_i}, those with $g_{i-1} < g_{t} - ja_{1} \leq  g_{i}$. By \eqref{g(a,b;p)}, this is equivalent to
		\begin{align*}
			\frac{ia_{2}a_{3}}{d_{1}^{2}} - \frac{a_{2}}{d_{1}} - \frac{a_{3}}{d_{1}}
			\,<\,
			(t+1)\frac{a_{2}a_{3}}{d_{1}^{2}} - \frac{a_{2}}{d_{1}} - \frac{a_{3}}{d_{1}} - ja_{1}
			\,\leq\,
			(i+1)\frac{a_{2}a_{3}}{d_{1}^{2}} - \frac{a_{2}}{d_{1}} - \frac{a_{3}}{d_{1}}.
		\end{align*}
		So,
		\begin{equation*}
			(t+1-i)\frac{a_{2}a_{3}}{a_{1}d_{1}^{2}}
			> j
			\geq
			(t-i)\frac{a_{2}a_{3}}{a_{1}d_{1}^{2}}.
		\end{equation*}
		Thus, by Lemma \ref{lemma: d(g_s-jc) = i iff g_i-1 < g_s-jc <= g_i}, there are 
		\begin{equation*}
			\left\lceil (t+1-i)\frac{a_{2}a_{3}}{a_1d_{1}^{2}} \right\rceil -
			\left\lceil (t-i)\frac{a_{2}a_{3}}{a_1d_{1}^{2}} \right\rceil 
		\end{equation*}
		of $j$ in $[0, g_{t}/a_{1})$ such that $d\big(g_{t}-ja_{1}; \frac{a_{2}}{d_{1}}, \frac{a_{3}}{d_{1}}\big) = i $ for $i = 1,2,\ldots,t$. So, by \eqref{eq:put m=g_s}, we have 
		\begin{align*}
			&d \Bigg(g\Big(\frac{a_{2}}{d_{1}}, \frac{a_{3}}{d_{1}} ;t\Big); a_{1}, \frac{a_{2}}{d_{1}}, \frac{a_{3}}{d_{1}}\Bigg) 
			\nonumber
			\\
			&= \sum_{j=0}^{\big\lfloor\frac{g_{t}}{a_{1}}\big\rfloor} d\Bigg(g\bigg(\frac{a_{2}}{d_{1}}, \frac{a_{3}}{d_{1}} ;t\bigg)-ja_{1} ; \frac{a_{2}}{d_{1}}, \frac{a_{3}}{d_{1}} \Bigg)
			\\
			&= 
			t
			\left\lceil \frac{a_{2}a_{3}}{a_{1}d_{1}^{2}} \right\rceil
			+
			(t-1)
			\bigg(
			\left\lceil \frac{2a_{2}a_{3}}{a_{1}d_{1}^{2}} \right\rceil -
			\left\lceil \frac{a_{2}a_{3}}{a_{1}d_{1}^{2}} \right\rceil 
			\bigg)
			+
			(t-2)
			\bigg(
			\left\lceil \frac{3a_{2}a_{3}}{a_{1}d_{1}^{2}} \right\rceil - \left\lceil \frac{2a_{2}a_{3}}{a_{1}d_{1}^{2}} \right\rceil \bigg) +
			\\
			&\hspace{5mm}
			\cdots
			+
			\bigg(
			\left\lceil \frac{ta_{2}a_{3}}{a_{1}d_{1}^{2}} \right\rceil -
			\left\lceil \frac{(t-1)a_{2}a_{3}}{a_{1}d_{1}^{2}} \right\rceil 
			\bigg)
			\\
			&=
			\sum_{j=0}^{t}
			\left\lceil \frac{ja_{2}a_{3}}{a_{1}d_{1}^{2}} \right\rceil.
		\end{align*}
		Therefore, 
		\begin{equation*}
			g\Big(\frac{a_{2}}{d_{1}}, \frac{a_{3}}{d_{1}} ; t\Big)
			\leq 
			g\Bigg(a_{1}, \frac{a_{2}}{d_{1}}, \frac{a_{3}}{d_{1}}; 
			\sum_{j=0}^{t}
			\left\lceil \frac{j a_{2}a_{3}}{a_{1}d_{1}^{2}} \right\rceil  \Bigg).
		\end{equation*}
        If we let $m > g\Big(\frac{a_{2}}{d_{1}}, \frac{a_{3}}{d_{1}} ; t\Big)$, then, by the definition of $g\Big(\frac{a_{2}}{d_{1}}, \frac{a_{3}}{d_{1}} ; t\Big)$,
        \begin{equation*}
        d\Big( m; \frac{a_2}{d_1}, \frac{a_3}{d_1} \Big) 
        >
        d\Big( g\Big(\frac{a_{2}}{d_{1}}, \frac{a_{3}}{d_{1}} ; t\Big); \frac{a_2}{d_1}, \frac{a_3}{d_1} \Big)
        \end{equation*}
        Since $a_1$ is divisible by $a_2/d_1$, followed by Lemma \ref{lemma for optimal m} when $K=0$, we have
        \begin{align*}
            d\Big(  m; &a_1, \frac{a_2}{d_1}, \frac{a_3}{d_1}    \Big)
            =
            d\Big(m; \frac{a_2}{d_1}, \frac{a_3}{d_1}\Big) 
                + \sum_{j=1}^{\lfloor m/a_1 \rfloor} d\Big(m-ja_{1} ; \frac{a_{2}}{d_{1}}, \frac{a_{3}}{d_{1}} \Big)
                \\
            &> d\Big( g\Big(\frac{a_{2}}{d_{1}}, \frac{a_{3}}{d_{1}} ; t\Big); \frac{a_2}{d_1}, \frac{a_3}{d_1} \Big) + \sum_{j=1}^{\lfloor m/a_1\rfloor} d\Big(m-ja_{1} ; \frac{a_{2}}{d_{1}}, \frac{a_{3}}{d_{1}} \Big)
            \\
            &\geq d\Big( g\Big(\frac{a_{2}}{d_{1}}, \frac{a_{3}}{d_{1}} ; t\Big); \frac{a_2}{d_1}, \frac{a_3}{d_1} \Big)
             +
             \sum_{j=1}^{\lfloor g_t/a_1\rfloor} d\Bigg(g\bigg(\frac{a_{2}}{d_{1}}, \frac{a_{3}}{d_{1}} ;t\bigg)-ja_{1} ; \frac{a_{2}}{d_{1}}, \frac{a_{3}}{d_{1}} \Bigg) 
             =
			\sum_{j=0}^{t}
			\left\lceil \frac{ja_{2}a_{3}}{a_{1}d_{1}^{2}} \right\rceil.
        \end{align*}
        Therefore the value $g\Big(\frac{a_{2}}{d_{1}}, \frac{a_{3}}{d_{1}} ; t\Big)$ is the largest one having $\sum_{j=0}^{t}
		\left\lceil \frac{j a_{2}a_{3}}{a_{1}d_{1}^{2}} \right\rceil$ representations in terms of $ a_{1}, \frac{a_{2}}{d_{1}}$ and $ \frac{a_{3}}{d_{1}} $. 
		Then
		\begin{equation*}
			g\Big(\frac{a_{2}}{d_{1}}, \frac{a_{3}}{d_{1}} ; t\Big)
			=
			g\Bigg(a_{1}, \frac{a_{2}}{d_{1}}, \frac{a_{3}}{d_{1}}; 
			\sum_{j=0}^{t}
			\left\lceil \frac{j a_{2}a_{3}}{a_{1}d_{1}^{2}} \right\rceil  \Bigg).
		\end{equation*}
		Hence, by \eqref{eq:main point in proof THM} and \eqref{g(a,b;p)},
		\begin{align*}
			g\Bigg( 
			a_{1}, a_{2}, a_{3}; \sum_{j=0}^{t}\left\lceil\frac{j a_{2}a_{3}}{a_{1}d_{1}^{2}} \right\rceil \Bigg)
			&=
			d_{1} g\Big(\frac{a_{2}}{d_{1}}, \frac{a_{3}}{d_{1}} ; t\Big)
			+
			a_{1}(d_{1}-1)
			\\
			&= d_{1}\Big((t+1)\frac{a_{2}a_{3}}{d_{1}^{2}}-\frac{a_{2}}{d_{1}}- \frac{a_{3}}{d_{1}}\Big) +a_{1}d_{1} -a_{1}
			\\
			&= (t+1)\frac{a_{2}a_{3}}{d_{1}} + a_{1}d_{1} - a_{1} - a_{2} - a_{3}.
            \tag*{\qedhere}
		\end{align*}
        \end{proof}

	Compared to the results in \cite{Komatsu-The Frobenius number repunits, Komatsu-Ying-Frob_Fibonacci} our main theorem seems more useful when $t$ is large, since their results have an upper bound on $t$. The result in \cite{Binner-binner2021bounds} holds for $s$ is extremely large. For example, by \cite[Section 3.2]{Binner-binner2021bounds}), $g(16,23,37;s)$ can be found for $s \geq 157291918$. Therefore, our result behaves nicely for $s$ not too large. In \cite{Woods-woods2022generalized} the value for $s$ is not explicitly given. 
	
	\section{Further Properties and Special Cases}\label{section: special cases}
	
	The next proposition shows that, for given $a ,b \in\mathbb{Z}_{> 0}$ with $\gcd(a,b)=1$, if $c \in \mathbb{Z}_{>0}$ such that $c\equiv 0\pmod{a}$ or $c\equiv 0\pmod{b}$, then the sequence $\big( d\big(g(a, b; s)-jc; a,b\big)\big)_{j\geq0}$ is decreasing.
	\begin{prop}
		Let $a,b\in\mathbb{Z}_{>0}$ with $\gcd(a,b) = 1$ and let $s \in \mathbb{Z}_{\geq0}$. Suppose that $c \in \mathbb{Z}_{>0}$ such that $c  \equiv 0 \pmod a$ or $c \equiv 0 \pmod b$. If $j_{1}, j_{2} \in\mathbb{Z}_{\geq0}$ such that $0 \leq j_{1} < j_{2} \leq \frac{g(a,b;s)}{c}$, then
		\begin{equation*}
			d \big(g(a, b; s)
			-j_{2}c	;a, b\big)
			\leq d \big(	g(a, b; s)-	j_{1}c;	a, b\big).
		\end{equation*}
	\end{prop}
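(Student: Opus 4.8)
The plan is to prove the inequality by exhibiting an explicit injection from the set of representations of $g(a,b;s)-j_2c$ into the set of representations of $g(a,b;s)-j_1c$; comparing cardinalities then gives the claim. This is in the same spirit as the argument used in Lemma~\ref{Lemma: d(g-jc) <= s}.

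By the symmetry of $g$ and $d$ it suffices to treat the case $c\equiv 0\pmod a$, so I would write $c=ka$ with $k\in\mathbb{Z}_{>0}$ and abbreviate $g_{s}:=g(a,b;s)$. Since $j_{2}\le g_{s}/c$ we have $g_{s}-j_{2}c\ge 0$, hence $N_{1}:=g_{s}-j_{1}c$ and $N_{2}:=g_{s}-j_{2}c$ are non-negative integers with $N_{1}-N_{2}=(j_{2}-j_{1})ka>0$. Now for any non-negative integers $x,y$ with $xa+yb=N_{2}$, the pair $\bigl(x+(j_{2}-j_{1})k,\,y\bigr)$ again has non-negative entries and satisfies $\bigl(x+(j_{2}-j_{1})k\bigr)a+yb=N_{2}+(j_{2}-j_{1})ka=N_{1}$, so it is a representation of $N_{1}$ in terms of $a$ and $b$. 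The assignment $(x,y)\mapsto\bigl(x+(j_{2}-j_{1})k,\,y\bigr)$ is visibly injective on the (finite) set of representations of $N_{2}$, whence $d(N_{2};a,b)\le d(N_{1};a,b)$, which is precisely the asserted inequality. If instead $c\equiv 0\pmod b$, the identical argument applies with the roles of $a$ and $b$ swapped, shifting the coefficient of $b$ by $(j_{2}-j_{1})(c/b)$.

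There is no real obstacle here; the only point that needs attention is that the shifted first coordinate remains non-negative, and this is exactly where the hypothesis $c\equiv 0\pmod a$ (as opposed to an arbitrary positive $c$) enters — a multiple of $a$ is what lets the surplus $N_{1}-N_{2}$ be absorbed into the coefficient of $a$ without spoiling non-negativity. As an alternative route, one could instead combine Lemma~\ref{lemma: d(g_s-jc) = i iff g_i-1 < g_s-jc <= g_i} with \eqref{g(a,b;p)}: setting $i_{t}:=d(g_{s}-j_{t}c;a,b)$ for $t=1,2$, that lemma gives $g_{i_{2}-1}<g_{s}-j_{2}c$ and $g_{s}-j_{1}c\le g_{i_{1}}$, and since $g_{s}-j_{2}c<g_{s}-j_{1}c$ while $(g_{i})_{i\ge -1}$ is strictly increasing, the assumption $i_{2}>i_{1}$ would force $g_{i_{1}}\le g_{i_{2}-1}<g_{s}-j_{1}c\le g_{i_{1}}$, a contradiction, so $i_{2}\le i_{1}$. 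I would present the injection argument as the main proof, since it is self-contained and does not even invoke the two-variable formula.
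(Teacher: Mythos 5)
Your injection argument is correct: writing $c=ka$, the map $(x,y)\mapsto\bigl(x+(j_{2}-j_{1})k,\,y\bigr)$ does carry representations of $g_{s}-j_{2}c$ injectively into representations of $g_{s}-j_{1}c$, and every step (non-negativity of the shifted coordinate, the arithmetic $N_{2}+(j_{2}-j_{1})ka=N_{1}$, injectivity) checks out. This is a genuinely different route from the paper, which instead invokes Lemma~\ref{Lemma: d(g-jc) <= s} to write $d(g_{s}-j_{1}c;a,b)=i$ and then runs a case analysis through the sandwich characterization of Lemma~\ref{lemma: d(g_s-jc) = i iff g_i-1 < g_s-jc <= g_i} together with the explicit formula \eqref{g(a,b;p)} — essentially the ``alternative route'' you sketch in your last paragraph. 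Your main proof is more elementary and strictly more general: it never uses $\gcd(a,b)=1$, the two-variable formula, or the fact that the starting point is a generalized Frobenius number, so it actually shows $d(n-c;a,b)\le d(n;a,b)$ for every integer $n$ and every positive multiple $c$ of $a$ or of $b$. What the paper's approach buys in exchange is sharper information — it identifies the exact value $i$ or $k$ of $d(g_{s}-jc;a,b)$ on each interval $(g_{i-1},g_{i}]$, which is the quantitative input needed later in the proof of Theorem~\ref{Main-Thm} — whereas your injection yields only the monotonicity. You also correctly identify that divisibility of $c$ by $a$ or $b$ is the essential hypothesis, consistent with the paper's counterexample $(a,b,c)=(4,5,3)$. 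Your sketched second route is also sound: if $i_{2}>i_{1}$ then monotonicity of $(g_{i})$ gives $g_{i_{1}}\le g_{i_{2}-1}<g_{s}-j_{2}c<g_{s}-j_{1}c\le g_{i_{1}}$, a contradiction.
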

	\begin{proof}
		For convenient, we let $g_{s} := g\big(a, b; s\big)$. 
		According to Lemma \ref{Lemma: d(g-jc) <= s}, we can assume that 
		\begin{equation*}
			d(g_{s} -	j_{1}c;	a, b) = i,
		\end{equation*}
		for some $i =0,1,\ldots,s$. 
		If $i =0$, then, by Lemma \ref{lemma: d(g_s-jc) = i iff g_i-1 < g_s-jc <= g_i},
		\begin{equation*}
			0 \leq  g_{s}-j_{1}c \leq g_{0}.
		\end{equation*}
		Since $j_{1}< j_{2} \leq \dfrac{g_{s}}{c}$, we have
		\begin{equation*}
			0 \leq g_{s}- j_{2}c < g_{s}-j_{1}c \leq g_{0}.
		\end{equation*}
		Then, by Lemma \ref{lemma: d(g_s-jc) = i iff g_i-1 < g_s-jc <= g_i}, 
		\begin{equation*}
			d \big(g_{s} -	j_{2}c;	a, b\big)=0
			=
			d \big(	g_{s} -	j_{1}c	;	a, b\big).
		\end{equation*} 
		Assume that $i \geq 1$. Again by Lemma \ref{lemma: d(g_s-jc) = i iff g_i-1 < g_s-jc <= g_i} , we have
		\begin{equation*}
			g_{i-1} < g_{s} - j_{1}c \leq g_{i}.
		\end{equation*}
		If $g_{i-1} < g_{s} - j_{2}c < g_{s} - j_{1}c \leq g_{i}$, It follows immediately that
		\begin{equation*}
			d\big(g_{s}-j_{2}c; a, b\big) = i = d\big(g_{s}-j_{1}c; a, b\big).
		\end{equation*}
		If $g_{s} - j_{2}c \leq g_{i-1}$, then, without loss of generality, assume that there exists $ k \in \{1,2,\ldots,i-1 \} $ such that
		\begin{equation*}
			g_{k-1} < g_{s} - j_{2}c \leq g_{k}.
		\end{equation*}
		Therefore, by Lemma \ref{lemma: d(g_s-jc) = i iff g_i-1 < g_s-jc <= g_i}, 
		\begin{equation*}
			d \big(
			g_{s} -	j_{2}c
			;
			a, b
			\big)
			=
			k
			< i 
			=
			d \big(
			g_{s} -	j_{1}c
			;
			a, b
			\big).\tag*{\qedhere}
		\end{equation*}
	\end{proof}
	
	For example, if we let $a =3, b=7, c=6,$ and $s =2$. Then $g(a,b;s)=g(3,7;2) = 63-3-7 = 53$. We have $6 \equiv 0\pmod{3}$. Then, 
		\begin{align*}
			\bigg(d\big(g(3,7;2) - 6j; 3,7\big)\bigg)_{j\geq0} &= \big(d(63-6j;4,5)\big)_{j \geq 0}
			\\
			&= (4,3,3,3,2,2,2,2,1,1,1,0,0,\ldots),
		\end{align*}
		which is decreasing.
	However, if $a =4, b=5, c=3,$ and $s =1$. Then $g(a,b;s)=g(4,5;1) = 40-4-5 = 31$. We have $c \not\equiv 0\pmod{a}$ and $c \not\equiv 0\pmod{b}$ and the sequence
		\begin{align*}
			\bigg(d\big(g(4,5;1) - 3j; 4,5\big)\bigg)_{j\geq0} &= \big(d(31-3j;4,5)\big)_{j \geq 0}
			\\
			&= (1,2,2,1,1,1,1,1,0,1,0,0,\ldots),
		\end{align*}
		which is not decreasing.
	
	The first application of Theorem 1 is to calculate the generalized Frobenius number for three consecutive triangular integers, which are the numbers of dots in an equilateral triangle. The explicit formula for the $n$th triangular number is given by $t_{n} = \binom{n+1}{2} = \frac{n(n+1)}{2}$. Robles-P\'{e}rez and Rosales \cite{Roble_Rosales} show that, for $n\in\mathbb{Z}_{>0}$, $\gcd(t_{n}, t_{n+1}, t_{n+2}) =1$ and 
	\begin{equation*}
		\gcd(t_{n+1}, t_{n+2}) 
		=
		\begin{cases}
			\frac{n+2}{2} &\text{ if } n \text{ is even;}
			\\
			n+2		&\text{ if } n \text{ is odd.}
		\end{cases}
	\end{equation*}
	
	\begin{corollary}\label{cor-Komatsu}
		\cite[Theorem 1]{Komatsu-triangular}
		Let $n \in \mathbb{Z}_{> 0}$ and $s \in \mathbb{Z}_{\geq 0}$. If  $d_{1} = \gcd(t_{n+1}, t_{n+2})$ and $d_{3} = \gcd(t_{n},t_{n+1})$, then, we have 
		\begin{equation}\label{eq: 1st triangular formula}
			g\Big( 
			t_{n}, t_{n+1}, t_{n+2}; \sum_{j=0}^{t}\left\lceil\frac{j t_{n+1}t_{n+2}}{t_{n}d_{1}^{2}} \right\rceil \Big)
			=(t+1)\frac{t_{n+1}t_{n+2}}{d_{1}} + t_{n}d_{1} -t_{n}-t_{n+1}-t_{n+2},
		\end{equation}
		and
		\begin{equation}\label{eq: 2nd triangular formula}
			g\Big( 
			t_{n}, t_{n+1}, t_{n+2}; \sum_{j=0}^{t}\left\lceil\frac{j t_{n}t_{n+1}}{t_{n+2}d_{3}^{2}} \right\rceil \Big)
			=(t+1)\frac{t_{n}t_{n+1}}{d_{3}} + t_{n+2}d_{3} -t_{n}-t_{n+1}-t_{n+2}.
		\end{equation}
	\end{corollary}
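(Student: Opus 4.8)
The plan is to deduce both formulas directly from Theorem \ref{Main-Thm} and Remark \ref{remark: after thm1}, the only work being to check — separately for $n$ even and $n$ odd — that the required divisibility hypothesis holds. Throughout I write $t_m = \frac{m(m+1)}{2}$ and use the fact, quoted from \cite{Roble_Rosales}, that $\gcd(t_n,t_{n+1},t_{n+2})=1$, so that all the generalized Frobenius numbers in the statement are well defined.

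For \eqref{eq: 1st triangular formula} I would take $(a_1,a_2,a_3)=(t_n,t_{n+1},t_{n+2})$, so $d_1=\gcd(a_2,a_3)=\gcd(t_{n+1},t_{n+2})$ is given by the displayed case formula preceding the corollary. If $n$ is even, then $d_1=\frac{n+2}{2}$, hence $\frac{t_{n+1}}{d_1}=n+1$, and since $\frac{n}{2}\in\mathbb{Z}$ we get $t_n=\frac{n}{2}(n+1)\equiv 0\pmod{n+1}$. If $n$ is odd, then $d_1=n+2$, hence $\frac{t_{n+1}}{d_1}=\frac{n+1}{2}$, and $t_n=n\cdot\frac{n+1}{2}\equiv 0\pmod{\frac{n+1}{2}}$. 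In either case $a_1\equiv 0\pmod{a_2/d_1}$, so Theorem \ref{Main-Thm} applies verbatim and yields \eqref{eq: 1st triangular formula}, noting that the ceiling argument there is exactly $\sum_{j=0}^{s}\lceil j a_2 a_3/(a_1 d_1^2)\rceil$.

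For \eqref{eq: 2nd triangular formula} the key observation is that $d_3=\gcd(t_n,t_{n+1})=\gcd(t_{(n-1)+1},t_{(n-1)+2})$, so substituting $n\mapsto n-1$ in the quoted formula for $\gcd(t_{m+1},t_{m+2})$ gives $d_3=\frac{n+1}{2}$ when $n$ is odd and $d_3=n+1$ when $n$ is even. I would then apply the $d_3$-version of Remark \ref{remark: after thm1} with $(a_1,a_2,a_3)=(t_n,t_{n+1},t_{n+2})$, for which one needs $t_{n+2}\equiv 0\pmod{t_n/d_3}$ or $t_{n+2}\equiv 0\pmod{t_{n+1}/d_3}$. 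If $n$ is odd, then $\frac{t_{n+1}}{d_3}=n+2$ and, since $\frac{n+3}{2}\in\mathbb{Z}$, $t_{n+2}=(n+2)\cdot\frac{n+3}{2}\equiv 0\pmod{n+2}$. If $n$ is even, then $\frac{t_{n+1}}{d_3}=\frac{n+2}{2}$ and $t_{n+2}=(n+3)\cdot\frac{n+2}{2}\equiv 0\pmod{\frac{n+2}{2}}$. Hence the hypothesis of Remark \ref{remark: after thm1} is met and \eqref{eq: 2nd triangular formula} follows, the ceiling argument being $\sum_{j=0}^{s}\lceil j a_1 a_2/(a_3 d_3^2)\rceil$.

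There is no substantial obstacle: the corollary is just a repackaging of Theorem \ref{Main-Thm} once the relevant $\gcd$'s are identified, and the only care required is (i) obtaining $\gcd(t_n,t_{n+1})$ from the quoted formula by the index shift $n\mapsto n-1$, and (ii) keeping the parity bookkeeping straight so that the quotients $t_{n+1}/d_1$ and $t_{n+1}/d_3$ are recognized as the correct integers. I would present the argument as two short parity case-checks, one for each displayed identity.
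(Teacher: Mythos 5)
Your proposal is correct and follows essentially the same route as the paper: identify $d_1$ and $d_3$ by parity of $n$, verify the divisibility hypotheses $t_n\equiv 0\pmod{t_{n+1}/d_1}$ and $t_{n+2}\equiv 0\pmod{t_{n+1}/d_3}$, and invoke Theorem \ref{Main-Thm} and Remark \ref{remark: after thm1}; the only cosmetic difference is that you organize the cases by formula rather than by parity and derive $d_3$ via the index shift $n\mapsto n-1$ where the paper states its value directly.
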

	\begin{proof}
        We provide the proof for the first case, as the proofs of the other cases follow similarly with analogous arguments. For completeness, we outline the first case below:
        
		If $n$ is even, then $d_{1} = \gcd(t_{n+1}, t_{n+2}) = \frac{(n+2)}{2}$ and $\frac{t_{n+1}}{d_{1}} = n+1$. Since $t_{n} = \frac{n(n+1)}{2} \equiv 0 \pmod{n+1}$, by applying Theorem \ref{Main-Thm}, we have 
		\begin{equation*}
			g\Big(t_{n},t_{n+1},t_{n+2}; \sum_{j=0}^{t}\left\lceil\frac{j t_{n+1}t_{n+2}}{t_{n}d_{1}^{2}} \right\rceil \Big)
			=(t+1)\frac{t_{n+1}t_{n+2}}{d_{1}} + t_{n}d_{1} -t_{n}-t_{n+1}-t_{n+2}.
		\end{equation*}
		
		Since $n$ is even, it follows that $d_{3} = \gcd(t_{n},t_{n+1}) = n+1$ and $\frac{t_{n+1}}{d_{3}} = \frac{n+2}{2}$. Since $t_{n+2} = \frac{(n+2)(n+3)}{2} \equiv 0 \pmod{\frac{n+2}{2}}$, by Remark \ref{remark: after thm1}, we obtain
		\begin{equation*}
			g\Big( 
			t_{n}, t_{n+1}, t_{n+2}; \sum_{j=0}^{t}\left\lceil\frac{j t_{n}t_{n+1}}{t_{n+2}d_{3}^{2}} \right\rceil \Big)
			=(t+1)\frac{t_{n}t_{n+1}}{d_{3}} + t_{n+2}d_{3} -t_{n}-t_{n+1}-t_{n+2}.
		\tag*{\qedhere}
            \end{equation*}
		
	\end{proof}
	
	Note that we can write \eqref{eq: 1st triangular formula} in Corollary \ref{cor-Komatsu} as the same result in \cite{Komatsu-triangular}: For even $n$, we have
	\begin{equation*}
		g\Big(t_{n}, t_{n+1}, t_{n+2}; t(t+1) +\sum_{j=1}^{t}\left\lceil\frac{6j}{n}\right\rceil \Big) 
		=
		\frac{(n+1)(n+2)(2t(n+3) + 3n)}{4} -1,
	\end{equation*}
	and, for odd $n \geq 3$,
	\begin{equation*}
		g\Big(t_{n}, t_{n+1}, t_{n+2}; \sum_{j=1}^{t}\left\lceil \frac{j}{2} \big(1 + \frac{3}{n}\big) \right\rceil \Big) 
		=
		\frac{(n+1)(n+2)((n+3)t + 3(n-1))}{4} -1.
	\end{equation*}
	
	\begin{example}\label{ex: triangular numbers}
		Let $n=4$. Then $(t_{4},t_{5},t_{6}) = (10,15,21)$. We have $d_{1} = \gcd(15,21) = 3$. For convenient, we let $\Sigma_{1} := \sum_{j=0}^{t}\left\lceil\frac{j t_{5}t_{6}}{t_{4}d_{1}^{2}} \right\rceil$. Then, by \eqref{eq: 1st triangular formula}, for $t \geq 0$, $g\big( 
		t_{4}, t_{5}, t_{6}; \Sigma_{1} \big)$ are shown as follow:
		\begin{center}
			\begin{tabular}{|c|ccccccccccc|}
				\hline
				$t$ & 0 & 1 & 2 & 3 & 4 & 5 & $\ldots$ & 100 & $\ldots$ & $10^{4}$ & $\ldots$  \\
				\hline
				$\Sigma_{1}$ & 0 & 4 & 11 & 22 & 36 & 54 & $\ldots$ & 17700 & $\ldots$ & 175020000 & $\ldots$ \\
				\hline
				$g(10,15,21;\Sigma_{1})$ & 89 & 194 & 299 & 404 & 509 & 614 & $\ldots$ & 10589 & $\ldots$ & 1050089 & $\ldots$  \\
				\hline
			\end{tabular} 
		\end{center}
		In the same way, $d_{3} = \gcd(10,15) = 5$. If $\Sigma_{2} := \sum_{j=0}^{t}\left\lceil\frac{j t_{4}t_{5}}{t_{6}d_{3}^{2}} \right\rceil$. Then, by \eqref{eq: 2nd triangular formula}, for $t \geq 0$, $g\big( 
		t_{4}, t_{5}, t_{6}; \Sigma_{2} \big)$ are shown as follow:
		\begin{center}
			\begin{tabular}{|c|ccccccccccc|}
				\hline
				$t$ & 0 & 1 & 2 & 3 & 4 & 5 & $\ldots$ & 100 & $\ldots$ & $10^{4}$ & $\ldots$  \\
				\hline
				$\Sigma_{2}$ & 0 & 1 & 2 & 3 & 5 & 7 & $\ldots$ & 1486 & $\ldots$ & 14291429 & $\ldots$ \\
				\hline
				$g(10,15,21;\Sigma_{2})$ & 89 & 119 & 149 & 179 & 209 & 239 & $\ldots$ & 3089 & $\ldots$ & 300089 & $\ldots$  \\
				\hline
			\end{tabular} 
		\end{center}
	\end{example}
	

	Beck and Kifer \cite{Beck_Kifer} presented a formula for calculating the generalized Frobenius number for special cases. We give an alternative proof of their result using Theorem \ref{Main-Thm} as follows.
	
	\begin{corollary}\label{Cor-2<=m1 < m2 < m3}
		\cite[Theorem 1 ($k=3$ and $t=n$)]{Beck_Kifer}
		Let $m_1,m_2,m_3$ be pairwise coprime numbers and let $a_{1}=m_2 m_3, a_{2}=m_1 m_3, a_{3}= m_1 m_2$.
		Then, for $n\in\mathbb{Z}_{\geq0}$,
		\begin{align*}
			g(a,b,c; t_{n}) &= \operatorname{lcm}(a,b,c) (n+2) - a - b -c\\
			&= m_1 m_2 m_3 (n+2) - m_1 m_2 - m_1 m_3 - m_2 m_3 \,.
		\end{align*}
		Moreover, we have $\{ d(n;a_1, a_2, a_3) \mid n\geq 1\} = \{ t_k \mid k\in\mathbb{Z}_{\geq0}\}$.
	\end{corollary}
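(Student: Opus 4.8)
The plan is to obtain the Frobenius-number formula as an immediate specialization of Theorem~\ref{Main-Thm} and then to prove the last sentence by a direct count of representations. Write $M=m_1m_2m_3$; since the $m_i$ are pairwise coprime, $M=\operatorname{lcm}(a_1,a_2,a_3)$ and $\gcd(a_1,a_2,a_3)=1$, and I will assume $m_1,m_2,m_3\ge 2$ (the cases with some $m_i=1$ being degenerate). Take $d_1=\gcd(a_2,a_3)=\gcd(m_1m_3,m_1m_2)=m_1$, so that $a_2/d_1=m_3$ and $a_3/d_1=m_2$; the hypothesis of Theorem~\ref{Main-Thm} holds because $a_1=m_2m_3\equiv 0\pmod{a_2/d_1}$. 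Since
$$
\frac{a_2a_3}{a_1d_1^{2}}=\frac{m_1^{2}m_2m_3}{m_1^{2}m_2m_3}=1,\qquad \frac{a_2a_3}{d_1}=M,\qquad a_1d_1=M,
$$
the argument of $g$ on the left of Theorem~\ref{Main-Thm} (with parameter $n$ in place of $s$) becomes $\sum_{j=0}^{n}\lceil j a_2a_3/(a_1d_1^{2})\rceil=\sum_{j=0}^{n}j=\binom{n+1}{2}=t_n$, and the right-hand side equals $(n+1)M+M-a_1-a_2-a_3=(n+2)M-m_1m_2-m_1m_3-m_2m_3$, which is the asserted formula.

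For the final statement I would count $d(n;a_1,a_2,a_3)$ directly. Given a representation $n=x_1a_1+x_2a_2+x_3a_3$, reducing modulo $m_1$ and using that $m_2m_3$ is a unit mod $m_1$ forces $x_1$ into a residue class $r_1$ modulo $m_1$ depending only on $n$; likewise $x_2\bmod m_2$ and $x_3\bmod m_3$ are forced to values $r_2,r_3$, with $0\le r_i<m_i$. Thus $x_1=r_1+m_1\alpha$, $x_2=r_2+m_2\beta$, $x_3=r_3+m_3\gamma$ with $\alpha,\beta,\gamma\ge 0$, and substituting yields $n=n_0+M(\alpha+\beta+\gamma)$ where $n_0:=r_1a_1+r_2a_2+r_3a_3\equiv n\pmod M$. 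Hence $d(n;a_1,a_2,a_3)=0=t_0$ when $n<n_0$, and otherwise it is the number of $(\alpha,\beta,\gamma)\in\mathbb{Z}_{\ge0}^{3}$ with $\alpha+\beta+\gamma=(n-n_0)/M$, namely $\binom{(n-n_0)/M+2}{2}=t_{(n-n_0)/M+1}$. This gives $\{d(n;a_1,a_2,a_3)\mid n\ge1\}\subseteq\{t_k\mid k\ge0\}$; the reverse inclusion follows by exhibiting $d(1)=0=t_0$ (each $a_i\ge4$), $d(a_1)=1=t_1$ (here $r_1=1$, $r_2=r_3=0$, $n_0=a_1$), and $d(qM)=t_{q+1}$ for every $q\ge1$, and it is in any case immediate from the Remark following Theorem~\ref{Main-Thm} once one checks that all three sums appearing there collapse to $\sum_{j=0}^{s}j=t_s$.

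The first paragraph is pure bookkeeping, so the only real work lies in the second. The crux is the assertion that the residues $x_1\bmod m_1$, $x_2\bmod m_2$, $x_3\bmod m_3$ are completely pinned down by $n$: this is exactly where pairwise coprimality enters, and it hinges on pairing each generator $a_i$ with the one modulus among $m_1,m_2,m_3$ that fails to divide it, so that B\'ezout produces the needed inverse. After that the Diophantine equation reduces to $\alpha+\beta+\gamma=(n-n_0)/M$, and the triangular-number count together with the identity $\binom{q+2}{2}=t_{q+1}$ finishes the proof; the remaining effort is only the routine verification of which $n$ realize each $t_k$.
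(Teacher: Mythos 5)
Your derivation of the Frobenius-number formula is exactly the paper's: both identify $d_{1}=\gcd(a_{2},a_{3})=m_{1}$, verify $a_{1}\equiv 0\pmod{a_{2}/d_{1}}$, observe that $a_{2}a_{3}/(a_{1}d_{1}^{2})=1$ so the shift parameter collapses to $\sum_{j=0}^{n}j=t_{n}$, and then read off $(n+2)M-a_{1}-a_{2}-a_{3}$ from Theorem \ref{Main-Thm}. Where you genuinely diverge is the final assertion $\{d(n;a_{1},a_{2},a_{3})\mid n\ge 1\}=\{t_{k}\mid k\ge 0\}$: the paper simply cites Theorem 1 of Beck--Kifer for this, whereas you prove it from scratch by reducing a representation modulo each $m_{i}$ to pin down $x_{i}\bmod m_{i}$, so that $d(n)$ becomes the number of lattice points on the simplex $\alpha+\beta+\gamma=(n-n_{0})/M$, i.e.\ $\binom{q+2}{2}=t_{q+1}$, with the value $t_{0}=0$ supplied by non-representable $n$ and the values $t_{q+1}$ realized at $n=qM$. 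Your argument is correct (the congruence $n_{0}\equiv n\pmod M$ needed to make $(n-n_{0})/M$ an integer follows from CRT, as you implicitly use), and it buys a self-contained proof of the ``moreover'' clause at the cost of the mild extra hypothesis $m_{i}\ge 2$, which you rightly flag and which matches the intended non-degenerate setting of the corollary; the paper's route is shorter but imports the representation count from the cited reference. One small caution: your parenthetical fallback via the Remark after Theorem \ref{Main-Thm} only yields the inclusion $\{t_{k}\}\subseteq\{d(n)\}$, not the equality, so your direct count is doing the real work and should not be presented as optional.
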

	\begin{proof}
		It is clearly that $d_{1} = \gcd(a_{2},a_{3}) = m_{1}, d_{2}=\gcd(a_{1},a_{3})=m_{2},$ and $d_{3} = \gcd(a_{1},a_{2}) = m_{3}$. Furthermore, $a_{1} \equiv 0\pmod{\frac{a_{2}}{m_{1}}}$, $a_{2} \equiv 0\pmod{\frac{a_{3}}{m_{2}}}$, and $a_{3} \equiv 0\pmod{\frac{a_{1}}{m_{3}}}$. Moreover, for $n\in\mathbb{Z}_{\geq 0}$ we have 
		\begin{equation*}
			\sum_{j=0}^{n}\left\lceil\frac{j a_{2}a_{3}}{a_{1}d_{1}^{2}} \right\rceil
			=
			\sum_{j=0}^{n}\left\lceil\frac{j a_{1}a_{3}}{a_{2}d_{2}^{2}} \right\rceil
			=
			\sum_{j=0}^{n}\left\lceil\frac{j a_{1}a_{2}}{a_{3}d_{3}^{2}} \right\rceil
			=
			\sum_{j=0}^{n} j
			=
			t_{n}.
		\end{equation*}
		So, by Theorem \ref{Main-Thm}, we obtain that for $n \in\mathbb{Z}_{\geq0}$,
		\begin{align*}
			g\big( 
			a_{1}, a_{2}, a_{3}; t_{n} \big)
			&=
			(n+1)m_{1}m_{2}m_{3} + m_{1}m_{2}m_{3} - m_{2}m_{3} - m_{1}m_{3} - m_{1}m_{2} 
			\\
			&=
			(n+2)m_{1}m_{2}m_{3} - m_{2}m_{3} - m_{1}m_{3} - m_{1}m_{2}
			\\
			&=
			\operatorname{lcm}(a,b,c) (n+2) - a - b -c.
		\end{align*}
		Moreover, by Theorem  1 of \cite{Beck_Kifer}, we obtain that 
        \begin{equation*}
        \{ d(n;a_1, a_2, a_3   ) \mid n\geq 1\} = \{ t_k \mid k\in\mathbb{Z}_{\geq0}\}.
        \tag*{\qedhere}
        \end{equation*}
        \end{proof}
	
	For example, let $(m_{1},m_{2},m_{3}) = (2,5,11)$ in Corollary \ref{Cor-2<=m1 < m2 < m3}. Then $a_{1} = 55, a_{2}=22$ and $a_{3}=10$. Then, for $n \in\mathbb{Z}_{>0}$, we compute $g(55,22,10; t_{n})$ by using Corollary \ref{Cor-2<=m1 < m2 < m3}. Hence, we have
		\begin{equation*}
			g(55,22,10;t_{n}) = 110(n+2) - 10 - 22 - 55= 110n + 133.
		\end{equation*}
		\begin{center}
			\begin{tabular}{|c|ccccccccc|}
				\hline
				$n  $ & 1 & 2 & 3 & 4 & 5 & 6 & $\ldots$ & 100 & $\ldots$   \\
				\hline
				$t_n$ & 1 & 3 & 6 & 10 & 15 & 21 & $\ldots$ & 5050 & $\ldots$  \\
				\hline
				$g(55,22,10;t_n)$ & 243 & 353 & 463 & 573 &683 &793 & $\ldots$ & 11133 & $\ldots$ \\
				\hline
			\end{tabular} 
		\end{center}
		Furthermore, we obtain that $\{ d(n;55,22,10) \mid n\geq 1\} = \{ t_k \mid k\geq 0\}$.

	The next special case is when one of the components is 1 and the others are arbitrary.
	\begin{corollary}\label{Cor-formula (1,a,b)}
		Let $a,b \in \mathbb{Z}_{>0}$ and $t \in \mathbb{Z}_{\geq 0}$	. Then 
		\begin{equation*}
			g\Big(1,a,b; \sum_{j=0}^{t}\left\lceil \frac{jb}{a}\right\rceil \Big)
			= tb-1
		\end{equation*}
	\end{corollary}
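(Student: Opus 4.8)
The plan is to obtain this as an immediate specialization of Remark \ref{remark: after thm1}, applied to the triple $(a_{1},a_{2},a_{3}) = (1,a,b)$. First I would note that $\gcd(1,a,b) = 1$ holds automatically for every $a,b\in\mathbb{Z}_{>0}$, so the standing gcd hypothesis of Theorem \ref{Main-Thm} and of Remark \ref{remark: after thm1} is satisfied with no extra assumption on $a$ and $b$.

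Next I would invoke the second displayed identity of Remark \ref{remark: after thm1}, which is keyed to $d_{2} = \gcd(a_{1},a_{3})$. With $a_{1}=1$ and $a_{3}=b$ we have $d_{2} = \gcd(1,b) = 1$, so the required congruence condition ``$a_{2}\equiv 0 \pmod{\frac{a_{1}}{d_{2}}}$'' becomes ``$a\equiv 0\pmod{1}$'', which is trivially true (in particular we never need the alternative condition $a\equiv 0\pmod b$). Hence Remark \ref{remark: after thm1} yields
\[
	g\Big(1,a,b;\ \sum_{j=0}^{s}\Big\lceil\tfrac{j a_{1}a_{3}}{a_{2}d_{2}^{2}}\Big\rceil\Big)
	= (s+1)\tfrac{a_{1}a_{3}}{d_{2}} + a_{2}d_{2} - a_{1} - a_{2} - a_{3},
\]
with $a_{1}=1,\ a_{2}=a,\ a_{3}=b,\ d_{2}=1$.

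Finally I would substitute these values: the summation argument becomes $\sum_{j=0}^{s}\lceil \frac{jb}{a}\rceil$, and the right-hand side simplifies to $(s+1)b + a - 1 - a - b = sb-1$, which is exactly the claimed formula. There is essentially no genuine obstacle here; the only point requiring care is to pick the correct one of the three symmetric forms in Remark \ref{remark: after thm1} — namely the one attached to the pair $\{a_{1},a_{3}\}=\{1,b\}$, so that the relevant gcd collapses to $1$ and the divisibility hypothesis is vacuous. (Equivalently, one may apply Theorem \ref{Main-Thm} directly to the reordered triple $(a,b,1)$, using $a\equiv 0\pmod 1$ and the symmetry of $g$; this produces the identical computation.)
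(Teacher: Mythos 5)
Your proof is correct, and it takes a genuinely more direct route than the paper. You simply specialize Theorem \ref{Main-Thm} (in the symmetric form recorded in Remark \ref{remark: after thm1}) to the triple $(1,a,b)$: the relevant gcd is $\gcd(1,b)=1$, the divisibility hypothesis collapses to $a\equiv 0\pmod 1$ and is vacuous, and the substitution $(s+1)b + a - 1 - a - b = sb-1$ finishes the job. The paper instead re-derives the identity from the intermediate relation \eqref{eq-relation} established inside the proof of Theorem \ref{Main-Thm}, combining it with the Beck--Kifer reduction (Lemma \ref{lemma-from-Beck-Kifer}) and the two-variable formula \eqref{g(a,b;p)}, and then solving a linear equation for $g(1,k,a_3;\cdot)$. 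Both arguments are valid and rest on the same underlying machinery; yours is the cleaner ``corollary-style'' deduction, since it treats the theorem as a black box, while the paper's version has the mild virtue of exhibiting the statement as a consequence of the weaker intermediate identity \eqref{eq-relation} without invoking the full strength of the theorem's conclusion. Your parenthetical observation that one may equivalently apply Theorem \ref{Main-Thm} directly to the reordered triple $(a,1,b)$ is also accurate and is the most economical phrasing of the argument.
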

	\begin{proof}
		Recall that, in Section \ref{Section-ProofThm1}, we show that the equation \eqref{eq-relation} holds, that is
		\begin{equation}\label{eq: relation give g(1,a,b)}
			g\Big( a_{1}, \frac{a_{2}}{d_{1}}, \frac{a_{3}}{d_{1}}; \sum_{j=0}^{t}\left\lceil\frac{ja_{2}a_{3}}{a_{1}d_{1}^{2}} \right\rceil \Big) 
			= 
			g\Big( \frac{a_{2}}{d_{1}}, \frac{a_{3}}{d_{1}} ;t\Big). 
		\end{equation} 
		At the beginning of the proof of Theorem \ref{Main-Thm}, we assumed that $a_{1} \equiv 0 \pmod{\frac{a_{2}}{d_{1}}}$. If $d_{1} = 1$, then $a_{1} = ka_{2}$ for some $k \in\mathbb{Z}_{>0}$. Then, by applying Lemma \ref{lemma-from-Beck-Kifer} on the left-hand side of \eqref{eq: relation give g(1,a,b)}, we obtain that
		\begin{align*}
			g\Big( ka_{2}, a_{2}, a_{3}; \sum_{j=0}^{t}\left\lceil\frac{ja_{3}}{k} \right\rceil \Big) 
			&=
			a_{2}g\Big(k, 1, a_{3};  \sum_{j=0}^{t}\left\lceil\frac{ja_{3}}{k} \right\rceil \Big) + a_{3}(a_{2}-1)
			\\
			&=
			a_{2}g\Big(1, k, a_{3};  \sum_{j=0}^{t}\left\lceil\frac{ja_{3}}{k} \right\rceil \Big)  + a_{2}a_{3} - a_{3}.	
		\end{align*}
		Applying \eqref{g(a,b;p)} to the right-hand side of \eqref{eq: relation give g(1,a,b)}, we have
		\begin{equation*}
			g( a_{2}, a_{3} ;t) = (t+1)a_{2}a_{3}- a_{2}-a_{3}.
		\end{equation*}
		Therefore,
		\begin{equation*}
			a_{2}g\Big(1, k, a_{3};  \sum_{j=0}^{t}\left\lceil\frac{ja_{3}}{k} \right\rceil \Big)  + a_{2}a_{3} - a_{3}
			=
			(t+1)a_{2}a_{3}- a_{2}-a_{3}.
		\end{equation*}
		Hence
		\begin{equation*}
			g\Big(1, k, a_{3};  \sum_{j=0}^{t}\left\lceil\frac{ja_{3}}{k} \right\rceil \Big)
			=
			ta_{3}-1.
		\end{equation*}
		Since $k$ and $a_{3}$ are arbitrary, thus for $a,b \in\mathbb{Z}_{>0}$
		\begin{equation*}
			g\Big(1, a, b;  \sum_{j=0}^{s}\left\lceil\frac{jb}{a} \right\rceil \Big) 
			=
			tb-1,
		\end{equation*} 
		as desired.
	\end{proof}
	\begin{example}
		Let $(a,b) = (4,9)$. Compute $ \displaystyle g\Big(1,4,9; \sum_{j=0}^{t}\left\lceil \frac{9j}{4} \right\rceil \Big)$ by using Corollary \mbox{\ref{Cor-formula (1,a,b)}\strut}. The result are shown as follows:
		
		\begin{center}
			\begin{tabular}{|c|ccccccccccc|}
				\hline
				$t$ & 0 & 1 & 2 & 3 & 4 & 5 & $\ldots$ & 100 & $\ldots$ & $10^{4}$ & $\ldots$   \\
				\hline
				$\Sigma_1 :=\displaystyle\sum_{j=0}^{t}\left\lceil \frac{9j}{4} \right\rceil$ & 0 & 3 & 8 & 15 & 24 & 36 & $\ldots$ & 11400 & $\ldots$ & 112515000 & $\ldots$ \\
				\hline
				$g(1,4,9;\Sigma_1)$ & -1 & 8 & 17 & 26 & 35 & 44 & $\ldots$ & 899 & $\ldots$ & 89999 & $\ldots$\\
				\hline
			\end{tabular} 
		\end{center}
		
		Similarly, if $(a,b) = (9,4)$, by using Corollary \mbox{\ref{Cor-formula (1,a,b)}\strut} the $ \displaystyle g\Big(1,9,4; \sum_{j=0}^{t}\left\lceil \frac{4j}{9} \right\rceil \Big)$  are as follows:
		
		\begin{center}
			\begin{tabular}{|c|ccccccccccc|}
				\hline
				$t$ & 0 & 1 & 2 & 3 & 4 & 5 &$\ldots$ & 100 & $\ldots$ & $10^{4}$ & $\ldots$  \\
				\hline
				$\Sigma_2 :=\displaystyle\sum_{j=0}^{t}\left\lceil \frac{4j}{9} \right\rceil$ & 0 & 1 &2&4&6&9&$\ldots$&2289&$\ldots$&22228889&$\ldots$ \\
				\hline
				$g(1,9,4;\Sigma_2)$ & -1&3&7&11&15&19&$\ldots$& 399 &$\ldots$& 39999 &$\ldots$
				\\ \hline
			\end{tabular} 
		\end{center}
	\end{example}
	
		%
		%
		%

\end{document}